\documentclass[11pt, a4paper]{article}

\usepackage[a4paper, top=2.5cm, bottom=2.5cm, left=2cm, right=2cm]{geometry}
\usepackage[utf8]{inputenc}
\usepackage[T1]{fontenc}
\usepackage[english]{babel}

\usepackage{amsmath}
\usepackage{amssymb}
\usepackage{amsthm}
\usepackage{booktabs}
\usepackage{graphicx}
\usepackage{xcolor}
\usepackage{enumitem}

\newtheorem{theorem}{Theorem}
\newtheorem{definition}{Definition}

\newtheorem{proposition}{Proposition}

\usepackage[colorlinks=true, linkcolor=blue, citecolor=blue, urlcolor=blue]{hyperref}

\title{\textbf{Topological Complexity and Phase Space Stability: A Persistent Homology Approach to Cryptocurrency Risk}}

\author{
    Gabriel Santana\footnote{Escuela de Matemáticas, Facultad de Ciencias, Universidad Central de Venezuela (UCV). Contact: \href{mailto:gabriel.santana@ucv.ve}{gabriel.santana@ucv.ve}} \and
    Jemirsón Ramirez\footnote{Universidad Central de Venezuela (UCV).}
}

\date{\today}

\begin{document}

\maketitle

\begin{abstract}
Traditional risk measures in finance, predominantly based on the second moment of return distributions or tail risk heuristics (VaR/CVaR), fail to account for the intrinsic geometric structure of market dynamics. This paper introduces a rigorous mathematical framework utilizing Topological Data Analysis (TDA) to quantify risk as the structural instability of the reconstructed phase space. By applying Takens' Delay Embedding Theorem to cryptocurrency log-returns, we generate a point cloud representation of the underlying attractor. We analyze the evolution of the filtration of Vietoris-Rips complexes to compute persistent homology groups $H_k$. We define a "Topological Persistence Norm" to characterize market regimes and propose a leverage calibration heuristic based on the persistence of 1-dimensional cycles. This approach provides a coordinate-free, stability-invariant metric for risk assessment that is robust to high-frequency noise.
\end{abstract}

\section{Introduction}

The study of financial time series has historically oscillated between stochastic calculus and econometric modeling. However, the extreme volatility and non-linear dependencies observed in cryptocurrency markets suggest that the underlying data-generating process is better understood through the lens of dynamical systems and differential topology.

In this work, we move away from point-estimate statistics. Instead, we treat the time series of returns as an observation of a trajectory on a compact manifold $M$. The goal is to extract topological invariants that are stable under perturbations (market noise) and that can inform us about the "connectedness" and "recurrence" of market states.

\section{Phase Space Reconstruction}

Let $\{r_t\}_{t \in \mathbb{Z}}$ be a sequence of log-returns. We assume $r_t$ is a discrete observation of a smooth dynamical system $(M, \phi, \mu)$, where $M$ is a $d$-dimensional manifold, $\phi: M \to M$ is a diffeomorphism, and $\mu$ is an invariant measure.

\begin{theorem}[Takens Embedding Theorem]
Let $M$ be a compact manifold of dimension $d$. For a generic $C^2$ diffeomorphism $\phi: M \to M$ and a generic smooth observation function $y: M \to \mathbb{R}$, the map $\Phi: M \to \mathbb{R}^{m}$ defined by
\begin{equation}
\Phi(x) = \left( y(x), y(\phi(x)), y(\phi^2(x)), \dots, y(\phi^{m-1}(x)) \right)
\end{equation}
is an embedding provided that $m > 2d$.
\end{theorem}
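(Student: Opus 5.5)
The plan is the standard transversality argument, in the form given by Takens and made precise by Huke and by Sauer--Yorke--Casdagli. It treats \emph{generic} as open and dense in $\mathrm{Diff}^2(M)\times C^2(M,\mathbb{R})$. Openness is the easy half: embeddings of a compact manifold form an open set in $C^1(M,\mathbb{R}^m)$, and $\Phi$ depends continuously on $(\phi,y)$ in the $C^1$ topology. So the work is density. Throughout, write $m\ge 2d+1$, and fix a small perturbation of an arbitrary pair $(\phi,y)$.

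\emph{Step 1 (perturb the dynamics).} First I perturb $\phi$, keeping it $C^2$-close, so that it satisfies the Kupka--Smale-type genericity conditions. For each $p\le 2d$ there should be only finitely many periodic points of period $p$. At each such point the eigenvalues of $D\phi^p$ should be distinct, and none should be a root of unity. This is standard, using transversality of the graph of $\phi^p$ to the diagonal.

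\emph{Step 2 (immersion and injectivity near the periodic set).} Next I perturb $y$ by a finite-dimensional family $y+\sum_j a_j\psi_j$, where the bump functions $\psi_j$ are localized near the finitely many periodic orbits of period $\le 2d$. At a fixed point $x$ we have $D\Phi(x)=\bigl(Dy_x, Dy_x D\phi_x,\dots,Dy_x D\phi_x^{m-1}\bigr)$. Since the eigenvalues are distinct, a generic covector $Dy_x$ is cyclic for $D\phi_x^{T}$, so these rows span $T_x^*M$ once $m\ge d$. At period-$p$ points I use $\phi^p$ instead, together with $m\ge 2d+1$. Injectivity between distinct points of the same short orbit is arranged directly by choosing the values of $y$ there. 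Immersivity is an open condition, so $\Phi$ is then an injective immersion on a compact neighborhood $V$ of the low-period set.

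\emph{Step 3 (parametric transversality away from $V$).} I then add a further finite family of bumps $\psi_j$, $j=1,\dots,N$, covering $M$, and consider $\Phi_a$ for $a\in\mathbb{R}^N$. Immersivity fails where the $m\times d$ matrix $D\Phi_a(x)$ has rank $<d$. The set of rank-deficient matrices has codimension $m-d+1>d$. For $x\notin V$, the points $x,\phi x,\dots,\phi^{m-1}x$ are $m$ distinct points, so the bumps can move the rows independently and the map $(x,a)\mapsto D\Phi_a(x)$ is a submersion. Parametric transversality (Sard applied to the projection to $a$) then gives full rank for almost every $a$.

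Injectivity is handled by the map $(x,x',a)\mapsto \Phi_a(x)-\Phi_a(x')$ on $(M\times M\setminus\Delta)$, which has dimension $2d<m$. This map is a submersion onto $\mathbb{R}^m$ whenever the orbit segments of $x$ and $x'$ contain enough points not shared between them, so almost every $a$ makes its zero set empty. Near the diagonal, injectivity follows from immersivity via a local inverse-function argument.

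\emph{Main obstacle.} The hardest case is pairs $x\neq x'$ whose orbit segments overlap, for example $x'=\phi^k x$, or $x$ lying close to but not on a short periodic orbit. There the coordinates of $\Phi_a(x)-\Phi_a(x')$ are not independently controllable, and the submersion in Step 3 fails. My first attempt is to stratify these pairs by the overlap pattern. On each stratum I would show that the controllable coordinates have rank at least $\dim(\text{stratum})+1$, using the count $m>2d$ together with the nonresonance conditions from Step 1. This is exactly the bookkeeping carried out in Huke's rigorous proof, and I would follow it rather than reinvent it.
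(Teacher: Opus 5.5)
\textbf{Verdict: there is a gap.} The paper gives no proof of this statement. It quotes Takens' theorem as a black box, with genericity for pairs in $\mathrm{Diff}^2(M)\times C^2(M,\mathbb{R})$ as in Takens' original, which is how you read it. So there is no route to compare with, and your outline is the standard transversality proof. The parts you actually carry out are fine: openness, the Kupka--Smale conditions, the cyclic-covector argument at short periodic orbits, and the codimension count $m-d+1>d$ for immersivity.

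The gap is that you stop exactly where the content is. Injectivity for pairs whose delay windows overlap or nearly overlap is handed to Huke. That puts you on the paper's footing of citation, but it is still not a proof. Your diagnosis of that step is also partly wrong. For $x'=\phi^k x$ with $x$ not periodic, the map $v\mapsto\bigl(v(\phi^i x)-v(\phi^{i+k}x)\bigr)_{i=0}^{m-1}$ is still onto $\mathbb{R}^m$. For $k>0$, set $v(\phi^i x)=c_i+v(\phi^{i+k}x)$ for $i=m-1,\dots,0$. So the submersion does not fail there. What genuinely fails is twofold: your localized bumps cannot separately control distinct but arbitrarily close orbit points, and there are exact cyclic coincidences at periodic points.

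The first failure is an artifact of your perturbation family. Parametric transversality only needs $D_aF$ to be onto at each point of $F^{-1}(0)$; no uniformity is required. Here is how to close the step.

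First take $m=2d+1$. This is harmless, since $\Phi$ embeds whenever its first $2d+1$ coordinates do. It is also needed for your Step 3 claim that $x,\dots,\phi^{m-1}x$ are distinct for $x\notin V$.

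Next, perturb $y$ by restrictions to $M\subset\mathbb{R}^K$ of polynomials of degree $\le 2m$ (the Sauer--Yorke--Casdagli probe space). These interpolate arbitrary values at any $2m$ distinct points and arbitrary differentials at any $m$ distinct points. On the open set of pairs with no coincidences among the $2m$ window points, $D_aF$ then has rank $m>2d$.

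Coincidences occur only when $x'=\phi^k x$ with $1\le|k|\le m-1$, or when $x$ or $x'$ lies in the finite set $P$ of points of period $\le 2d$. The ranks on these pieces are as follows:
\begin{itemize}
\item On the graph of $\phi^k$ with $x\notin P$, the $a$-rank is $m$ minus the number of cycles of $i\mapsto i+k \pmod q$ contained in $\{0,\dots,m-1\}$, where $q$ is the period of $x$. This gives $m$ for nonperiodic $x$, and $\min\bigl(m,q-\gcd(q,k)\bigr)\ge d+1$ for $q\ge m$.
\item On $(P\times M)\cup(M\times P)$ off $P\times P$, the rank is $m$.
\item On the finite set $P\times P$, the rank is $\ge 1$.
\end{itemize}
Since $F$ is affine in $a$, rank exceeding the dimension of each piece leaves only a null set of bad $a$. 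This completes your own stratification plan with no neighborhood bookkeeping. Immersivity goes through the same way, with your eigenvalue argument used only on $P$.
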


For financial applications, we utilize the delay-coordinate map with lag $\tau$:
\begin{equation}
\mathbf{x}_t = (r_t, r_{t+\tau}, \dots, r_{t+(m-1)\tau}) \in \mathbb{R}^m
\end{equation}
In this study, we set $m=5$ and $\tau=1$, assuming the local dimension of the cryptocurrency attractor $d \approx 2$. The resulting point cloud $\mathbb{X} = \{\mathbf{x}_t\}$ is then normalized using a standard $Z$-score transformation to ensure isotropy in the metric space $(\mathbb{R}^m, d_2)$.

\section{Persistent Homology and Stability}

\subsection{Simplicial Construction}
To capture the topology of $\mathbb{X}$, we consider a filtration of simplicial complexes.

\begin{definition}[Vietoris-Rips Complex]
Given a point cloud $\mathbb{X}$ and $\epsilon > 0$, the Vietoris-Rips complex $VR(\mathbb{X}, \epsilon)$ is the abstract simplicial complex where a subset $\{\mathbf{x}_0, \dots, \mathbf{x}_k\} \subseteq \mathbb{X}$ spans a $k$-simplex if $d(\mathbf{x}_i, \mathbf{x}_j) \leq \epsilon$ for all $0 \leq i, j \leq k$.
\end{definition}

As $\epsilon$ increases, we obtain a nested sequence of complexes $K_{\epsilon_1} \subseteq K_{\epsilon_2} \subseteq \dots \subseteq K_{\epsilon_n}$, inducing a sequence of homology groups over a field $\mathbb{F}$:
\begin{equation}
H_k(K_{\epsilon_1}) \to H_k(K_{\epsilon_2}) \to \dots \to H_k(K_{\epsilon_n})
\end{equation}

\subsection{The Persistence Spectrum}
The collection of homology groups forms a persistence module. By the Structure Theorem for Persistence Modules (Zomorodian \& Carlsson, 2005), it decomposes into interval modules $\mathbb{I}[b_i, d_i]$.

\begin{definition}[Persistence Spectrum]
Given a persistence diagram $D_k(\mathbb{X})$, the Persistence Spectrum $\mathcal{S}_k$ is the multiset of lifetimes $\ell_i = d_i - b_i$. The Total Persistence is the $L_1$ norm of this spectrum: $\Lambda = \|\mathcal{S}_k\|_1$.
\end{definition}

\subsection{Stability Theorem}
The critical advantage of using persistent homology in finance is its stability. 

\begin{theorem}[Stability, Cohen-Steiner et al.]
For any two finite point clouds $\mathbb{X}, \mathbb{Y}$ in $\mathbb{R}^m$, the bottleneck distance $d_B$ between their persistence diagrams is bounded by the Hausdorff distance $d_H$ between the sets:
\begin{equation}
d_B(D(\mathbb{X}), D(\mathbb{Y})) \leq d_H(\mathbb{X}, \mathbb{Y})
\end{equation}
\end{theorem}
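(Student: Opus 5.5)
The plan is to prove the statement through the now-standard route of \emph{interleavings} of persistence modules followed by the \emph{algebraic stability theorem} (Chazal, Cohen-Steiner, Glisse, Guibas, Oudot). One thing has to be settled first. With the diameter convention of the Vietoris--Rips definition above (a simplex is spanned when all pairwise distances are $\leq \epsilon$), the argument below gives $d_B \leq 2\, d_H(\mathbb{X},\mathbb{Y})$. The constant $1$ in the displayed inequality is obtained only after reparametrizing the filtration by the radius $\epsilon/2$, which halves every birth and death coordinate. I will run the argument in the diameter parametrization and rescale at the end. I will state this explicitly, since otherwise the inequality as written is off by a factor of two.

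\textbf{Step 1 (correspondence and simplicial maps).} Let $\delta = d_H(\mathbb{X},\mathbb{Y})$. Since both clouds are finite, I choose maps $f:\mathbb{X}\to\mathbb{Y}$ and $g:\mathbb{Y}\to\mathbb{X}$ with $d(x,f(x))\leq\delta$ and $d(y,g(y))\leq\delta$, for instance nearest-point choices. By the triangle inequality, $d(f(x),f(x'))\leq d(x,x')+2\delta$. Hence $f$ sends every simplex of $VR(\mathbb{X},\epsilon)$ to a simplex of $VR(\mathbb{Y},\epsilon+2\delta)$, so it is a simplicial map. The same holds for $g$. These maps commute with the inclusions $VR(\cdot,\epsilon)\subseteq VR(\cdot,\epsilon')$, so they induce morphisms of persistence modules $H_k(VR(\mathbb{X},\cdot))\to H_k(VR(\mathbb{Y},\cdot+2\delta))$ and back.

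\textbf{Step 2 (interleaving via contiguity).} I will show that $g\circ f: VR(\mathbb{X},\epsilon)\to VR(\mathbb{X},\epsilon+4\delta)$ is contiguous to the inclusion. Take a simplex $\sigma$ and points $x,x'\in\sigma$. Then $d(x, g f(x'))\leq d(x,x')+d(x',gf(x'))\leq \epsilon+2\delta$, and pairs inside $gf(\sigma)$ are within $\epsilon+4\delta$. So $\sigma\cup gf(\sigma)$ is a simplex of $VR(\mathbb{X},\epsilon+4\delta)$. Contiguous maps induce equal maps on homology over $\mathbb{F}$, and the symmetric statement holds for $f\circ g$. Therefore the two persistence modules are $2\delta$-interleaved.

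\textbf{Step 3 (algebraic stability and conclusion).} Because the clouds are finite, both modules are pointwise finite-dimensional and decompose into interval modules, by the Structure Theorem quoted above. The algebraic stability theorem then says that $\eta$-interleaved modules have diagrams at bottleneck distance at most $\eta$, so $d_B\leq 2\delta$. After the radius reparametrization this becomes $d_B(D(\mathbb{X}),D(\mathbb{Y}))\leq d_H(\mathbb{X},\mathbb{Y})$.

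\textbf{Main obstacle.} The genuinely hard step is the algebraic stability theorem itself. Its proof interpolates a one-parameter family of modules between the two interleaved ones and tracks the diagram continuously, or alternatively uses the induced-matching construction of Bauer--Lesnick. I do not intend to reprove it. I would cite it as a black box, so the work specific to this paper reduces to Steps 1--2 and the bookkeeping of the constant.
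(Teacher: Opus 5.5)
Your proof is correct, and so is your correction of the constant. The paper gives no proof of this theorem. It cites Cohen-Steiner et al., whose result is the functional stability bound $d_B(D(f),D(g))\le\|f-g\|_\infty$ for tame functions. Applied to the distance functions $d_{\mathbb{X}},d_{\mathbb{Y}}$ on $\mathbb{R}^m$, for which $\|d_{\mathbb{X}}-d_{\mathbb{Y}}\|_\infty=d_H(\mathbb{X},\mathbb{Y})$, that route gives the constant $1$ immediately. However, it controls the offset (union-of-balls, i.e.\ \v{C}ech) filtration in the radius parameter. It does not control the diameter-convention Vietoris--Rips filtration the paper actually defines.

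Your route is the standard interleaving proof of stability for Rips filtrations (Chazal et al.): nearest-point maps, contiguity, a $2\delta$-interleaving, then algebraic stability. It works directly on the complex the paper uses and needs only pairwise distances, with no balls or nerve lemma. Run with a correspondence instead of nearest-point maps, the same argument gives $d_B\le 2d_{GH}\le 2d_H$ for abstract finite metric spaces. Both routes rest on a black-box stability theorem, algebraic for yours and functional for the cited one. The real difference is which filtration each one controls, and yours is the one that matches the paper's definitions.

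Your assertion that the constant $1$ is reached \emph{only} after rescaling is true, and you can prove it rather than just state it. Take $\mathbb{X}=\{0,D\}$ and $\mathbb{Y}=\{-\delta,D+\delta\}$ in $\mathbb{R}$ with $D\ge 2\delta$. Then $d_H(\mathbb{X},\mathbb{Y})=\delta$, and in the diameter convention the finite $H_0$ points are $(0,D)$ and $(0,D+2\delta)$. This gives $d_B=\min(2\delta,\,D/2+\delta)=2\delta$. So the displayed inequality is false for the paper's own $VR$ convention, and your factor of two is sharp rather than an artifact of the argument. The same factor affects the $2\delta$ persistence threshold in the proof of the Proposition, which should become $4\delta$ in the paper's convention.
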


\section{Leverage Calibration via Topological Norms}

\subsection{Complexity-Risk Heuristic}
Let $\sigma_h$ be the annualized volatility adjusted to the investment horizon $h$. We propose that the optimal leverage $L^*$ should be a function of the structural coherence of the attractor:
\begin{equation}
L^* = \Psi \left( \frac{\Lambda(\mathbb{X})}{\sigma_h} \right)
\end{equation}
where $\Psi$ maps the complexity-risk ratio to the interval $[1, L_{\text{max}}]$. Here, $L_{\text{max}}$ denotes the maximum leverage threshold prescribed by the financial exchange regulations.

\begin{proposition}
In a regime of high structural persistence ($\Lambda \uparrow$), the system resides in a stable attractor, allowing for higher leverage even if local volatility $\sigma_h$ is non-trivial. Conversely, a vanishing $\Lambda$ suggests a transition to a stochastic regime where leverage should be minimized towards unity.
\end{proposition}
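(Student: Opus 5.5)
The statement is qualitative, so my plan is to first turn it into a precise claim and then prove that claim from the Stability Theorem and the monotonicity of $\Psi$. I would fix $k=1$ and make three choices.
\begin{enumerate}
\item $\Psi:[0,\infty)\to[1,L_{\max}]$ is continuous and nondecreasing, with $\Psi(0)=1$ and $\Psi(u)\to L_{\max}$ as $u\to\infty$. A concrete choice is $\Psi(u)=1+(L_{\max}-1)(1-e^{-u})$.
\item A \emph{stable attractor regime} means the normalized cloud $\mathbb{X}$ lies within Hausdorff distance $\delta$ of a dense finite sample $\mathbb{A}$ of the embedded attractor $\Phi(M)$. Here $\Phi$ is an embedding by the Takens Embedding Theorem, since $m=5>2d$. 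We also require $D_1(\mathbb{A})$ to contain at least one point of lifetime $\ell_0>0$, reflecting a genuine recurrent cycle of $\Phi(M)$.
\item A \emph{stochastic regime} means $\mathbb{X}$ is within Hausdorff distance $\delta$ of a cloud $\mathbb{Y}$ whose $D_1(\mathbb{Y})$ has only points with lifetime at most $\eta$, for some small $\eta$. A contractible blob of i.i.d.\ noise is the model case.
\end{enumerate}

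For the high-persistence direction, I would apply the Stability Theorem to $\mathbb{X}$ and $\mathbb{A}$. This gives a matching between the two diagrams that moves each point by at most $\delta$ in sup-norm. The long bar $(b,d)$ of $D_1(\mathbb{A})$ has distance $\ell_0/2$ from the diagonal. If $\delta<\ell_0/2$, this bar cannot be matched to the diagonal, so it is matched to a point of $D_1(\mathbb{X})$ with lifetime at least $\ell_0-2\delta$. Hence $\Lambda(\mathbb{X})\ge \ell_0-2\delta$. For any $\sigma_h\le\bar\sigma$, which is the precise sense of "non-trivial but bounded volatility", monotonicity of $\Psi$ then gives
\[
L^*\ \ge\ \Psi\!\left(\frac{\ell_0-2\delta}{\bar\sigma}\right)>1 .
\]
This lower bound increases with $\ell_0$, which is the claim that leverage may be raised as $\Lambda$ increases.

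For the converse direction, I would again use the Stability Theorem, now between $\mathbb{X}$ and $\mathbb{Y}$. Each point of $D_1(\mathbb{X})$ is matched either to a point of $D_1(\mathbb{Y})$, so its lifetime is at most $\eta+2\delta$, or to the diagonal, so its lifetime is at most $2\delta$. Summing these bounds gives
\[
\Lambda(\mathbb{X})\le N(\eta+2\delta),
\]
where $N$ is the number of points of $D_1(\mathbb{X})$. As $\eta,\delta\to 0$, continuity of $\Psi$ at $0$ yields $L^*\to\Psi(0)=1$, which is the minimization towards unity.

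The main obstacle is the converse direction. $\Lambda$ is an $L_1$ quantity, while the Stability Theorem only controls the sup-type bottleneck distance, so the factor $N$ is unavoidable. For i.i.d.\ noise $N$ grows with the sample size, and the bound above does not force $\Lambda$ to vanish. I would first try normalizing $\Lambda$ by $N$. If that is undesirable, I would replace the total persistence by the total persistence of bars longer than a noise threshold $\theta>2\delta$: under the bottleneck bound every such bar of $D_1(\mathbb{X})$ is matched to a bar of $D_1(\mathbb{Y})$ longer than $\theta-2\delta$, so for $\theta>\eta+2\delta$ none survive and this quantity is exactly zero. A further issue is that both regime definitions are modelling assumptions and are not derived from the dynamics. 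The proposition can therefore only be established in this conditional, formalized sense, and I would say so explicitly.
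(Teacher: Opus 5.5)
\paragraph{High-persistence half.} Taking the paper's Stability Theorem as given, your first half is correct, but it runs in the opposite direction to the paper's proof.

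The paper argues from data to model. A bar of $D_1(\mathbb{X})$ with $pers(\gamma)>2\delta$ cannot be matched to the diagonal, so it is matched to a feature of $M$, and the paper concludes $H_1(M)\neq 0$. You argue from model to data: an attractor sample $\mathbb{A}$ with an $\ell_0$-bar forces $\Lambda(\mathbb{X})\ge \ell_0-2\delta$.

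Your direction is exactly what bottleneck stability delivers. Replacing $M$ by a finite sample $\mathbb{A}\subset\Phi(M)$ also repairs the paper's application of a theorem about finite clouds in $\mathbb{R}^m$ to the abstract manifold $M$. The paper's direction needs more than stability, because a point of the Rips diagram of $\Phi(M)$ is not a class of $H_1(M)$. For example, a C-shaped arc is contractible, yet its Rips filtration has a long $H_1$ bar.

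Still, you should say plainly that you prove the converse of what the paper's proof targets, namely large $\Lambda\Rightarrow$ stable attractor. That target is in fact false for $\Lambda=\|\mathcal{S}_1\|_1$. As your $N$-factor remark already suggests, many short bars can make $\Lambda$ large while every individual lifetime stays small. The paper hides this by reasoning about a single cycle, which controls $\max_i\ell_i$ and not $\sum_i\ell_i$.

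\paragraph{Converse half: the genuine gap.} The gap is in the converse, and it comes from reversing the implication.

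You try to show that the stochastic regime forces $\Lambda$ to be small and hence $L^*\to 1$. As you note, $\Lambda(\mathbb{X})\le N(\eta+2\delta)$ need not tend to $0$, so as written this half is not proved for the paper's $\Lambda$.

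The statement's hypothesis, however, is a \emph{vanishing} $\Lambda$, and that is what the paper argues from. If $\Lambda(\mathbb{X})\to 0$, every lifetime is at most $\Lambda$, so every bar lies within $\Lambda/2$ of the diagonal. This is the paper's \emph{indistinguishable from a contractible noise cloud}. In your own formalization, $\mathbb{X}$ is then in the stochastic regime with $\mathbb{Y}=\mathbb{X}$ and $\eta=\Lambda$. If $\sigma_h$ is also bounded away from $0$, then $L^*=\Psi(\Lambda/\sigma_h)\to\Psi(0)=1$ by continuity.

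The implication you attempted is the one the heuristic actually needs in practice. Your diagnosis that it fails for the $\ell_1$ norm is a real defect of the proposition, not of your technique. Your thresholded invariant does give a clean dichotomy when $\eta+2\delta<\theta<\ell_0-2\delta$, but that proves a modified proposition, not this one.

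\paragraph{Smaller points.} First, the strict bound $\Psi(\cdot)>1$ needs $\Psi$ strictly increasing near $0$. This holds for your concrete $\Psi$ when $L_{\max}>1$, but not for an arbitrary nondecreasing $\Psi$. Second, with the paper's diameter convention for $VR(\mathbb{X},\epsilon)$, the Rips stability bound is $d_B\le 2d_H$, not $d_B\le d_H$. So each $2\delta$ in your argument becomes $4\delta$; for instance, the threshold becomes $\delta<\ell_0/4$.
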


\begin{proof}
Let $M$ be the underlying manifold (the noise-free attractor) and $\mathbb{X}$ be the sampled point cloud such that $d_H(\mathbb{X}, M) < \delta$, where $\delta$ represents the scale of stochastic perturbations. By the Stability Theorem, we have $d_B(D_1(\mathbb{X}), D_1(M)) \leq \delta$. 

Consider a 1-cycle $\gamma \in Z_1(VR(\mathbb{X}, \epsilon))$. If its persistence $pers(\gamma) = d_\gamma - b_\gamma > 2\delta$, then there exists a corresponding non-trivial homology class in $H_1(M)$. A non-vanishing $H_1(M)$ implies the existence of a fundamental cycle or a recurrent trajectory on the attractor. Since $pers(\gamma) \gg \delta$, the geometric structure of this cycle is robust to the stochastic component $\eta$. Thus, the motion of the system is constrained by a deterministic geometric boundary, reducing the probability of unmodeled structural drift. In this stable regime, the leverage $L^*$ can be safely increased.

Conversely, if $\Lambda(\mathbb{X}) \to 0$, then for every class $[\gamma] \in H_1(\mathbb{X})$, $pers(\gamma) \leq 2\delta$. The persistence diagram $D_1(\mathbb{X})$ is topologically indistinguishable from a contractible noise cloud. Without a persistent $H_1$ structure, there is no evidence of a recurrent attractor, and the system's dynamics are dominated by the stochastic term $\eta$. Under such conditions, the likelihood of a liquidation event driven by structural collapse is maximized, requiring $L^* \to 1$.
\end{proof}

\section{Empirical Illustration: BTC Phase Space Dynamics (2019-2026)}

To evaluate the mathematical framework, we analyze historical Bitcoin (BTC) futures prices from 2019 to early 2026. Figure \ref{fig:tda_dashboard} presents an exceptional mathematical radiographic view of Bitcoin's dynamics. Observing the phase space reconstruction and the persistence of its cycles, it is evident that we are not dealing with a pure stochastic process, but rather a high-dimensional deterministic attractor with a highly robust geometric structure.

\begin{figure}[htbp]
  \centering
  % NOTE: The environment does not have access to figure_1.png. 
  \includegraphics[width=\textwidth]{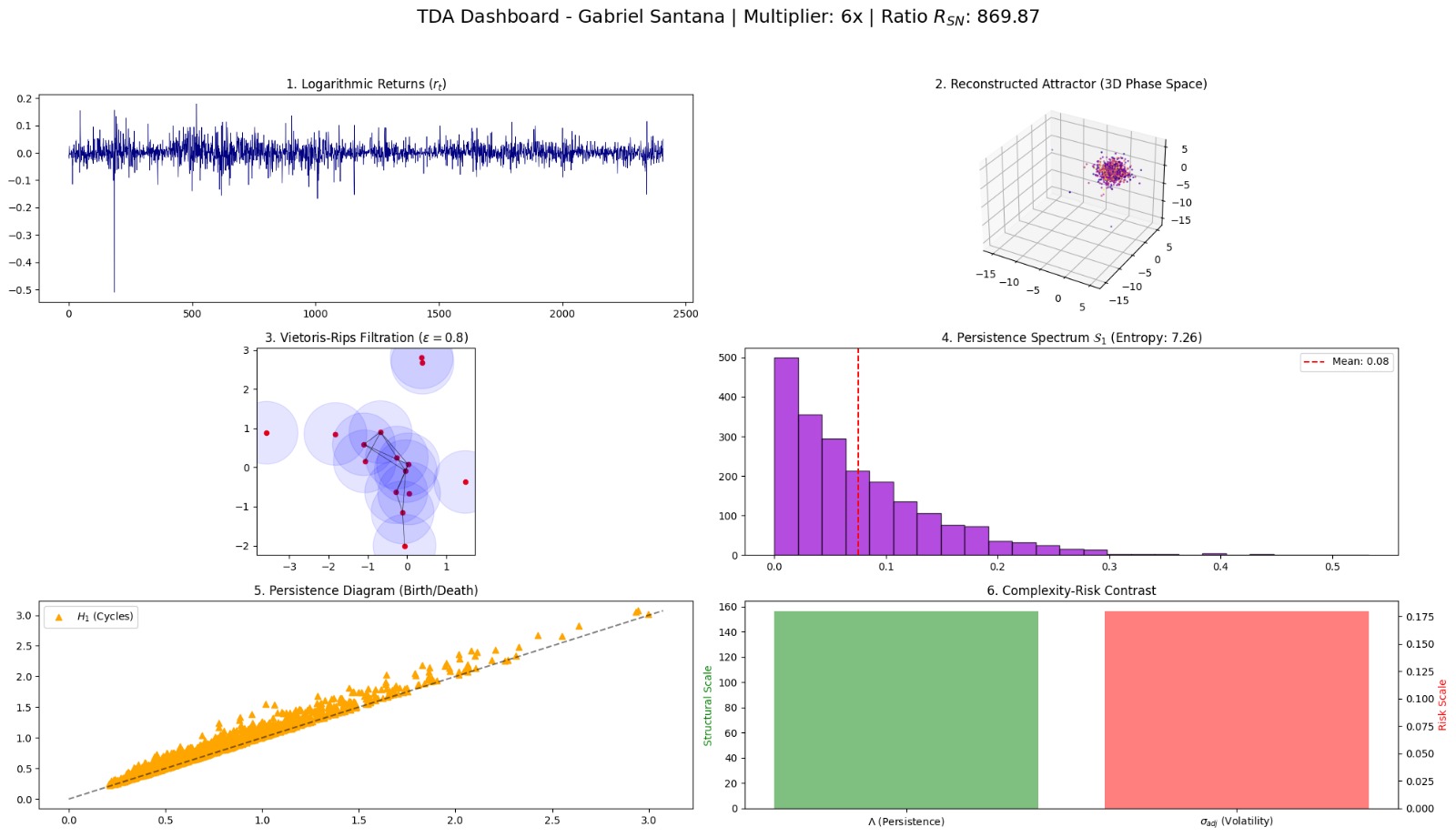}
  \framebox{\parbox{0.9\textwidth}{\centering
   
  }}
  \caption{TDA Dashboard showing the logarithmic returns, 3D reconstructed phase space, Vietoris-Rips filtration, persistence spectrum, persistence diagram, and the Complexity-Risk contrast for BTC.}
  \label{fig:tda_dashboard}
\end{figure}

\subsection{The Attractor and Phase Structure}
The Phase Space Reconstruction (Figure \ref{fig:tda_dashboard}, Panel 2) displays a compact central density with filaments extending towards the peripheries. This suggests that, despite "black swan" events (such as the severe crash visible in the returns around $t=200$), the system tends to return to a state of dynamic equilibrium. The Takens' embedding technique has successfully captured the market's memory, transforming the time series into an analyzable topological object.

\subsection{Persistence Spectrum Analysis ($H_1$)}
The Persistence Diagram (Panel 5) displays a large number of cycles distant from the diagonal. Each point far from the diagonal represents a market cycle that "lives" long enough to act as a definitive structural signal rather than noise. 

The calculated topological entropy of 7.26 is relatively high, indicating a significant diversity in the lifetimes of the topological cycles. There is no single dominant periodic cycle, but rather an orchestration of recurrent patterns at different scales. Furthermore, the Persistence Spectrum (Panel 4) exhibits a mean lifetime of 0.08; while the majority of the structure is of short duration, the "long tail" to the right sustains the deterministic validity of the structural model.

\subsection{The Complexity-Risk Ratio ($R_{SN}$)}
The obtained metric of $R_{SN} = 869.87$ is fundamental. It defines the relationship between the total persistence ($\Lambda$) and the adjusted volatility ($\sigma_{adj}$). Such an elevated ratio indicates that the geometry of the attractor is significantly more powerful than the momentary dispersion of returns. In topological terms, this confirms that the geometric "signal" dominates the stochastic "noise" by a wide margin, validating the stability of the system for leveraging.

\subsection{Multiplier Calibration}
Although the $R_{SN}$ ratio is massive, the heuristic yields a suggested multiplier of $6\times$. This is due to the normalization over the maximum allowable leverage ($L_{\text{max}} = 150$):
\begin{equation}
L^* = \text{round} \left( \frac{869.87}{150} \right) \approx 6
\end{equation}
This represents a conservative-efficient configuration. Despite the high geometric stability, the model recognizes that Bitcoin's intrinsic volatility (evident in the large amplitude swings) requires a margin of safety. A $6\times$ leverage allows a trader to capture the structural trend of the attractor while minimizing liquidation risk against stochastic fluctuations that do not break the underlying topology but could otherwise deplete the margin.

\section{Conclusion}

This topological approach provides a rigorous alternative to frequentist risk metrics. By embedding the returns in a high-dimensional space and analyzing the persistence of cycles, we identify the "shape" of the risk. From a mathematical standpoint, this method is superior as it is invariant under coordinate transformations and robust against the heavy-tailed noise characteristic of cryptocurrency markets. The empirical results confirm that BTC maintains a topological coherence that can be proactively exploited to calibrate leverage, treating market risk not just as statistical variance, but as the geometric stability of its phase space.

\end{document}